\documentclass{amsart}
\usepackage{color}

\newtheorem{Theorem}{Theorem}[section]
\newtheorem{Lemma}[Theorem]{Lemma}

\newcommand{\G}{\mathcal{G}}

\begin{document}
\date{\today}

\title{Distinguishing actions of symmetric groups and related graphs}

\author{Mariusz Grech, Andrzej Kisielewicz}
\address{Faculty of Pure and Applied Mathematics, Wroc\l aw University of Science and Technology \\
Wybrzeże Wyspiańskiego Str. 27,
50-370 Wrocław, Poland}
\email{[mariusz.grech,andrzej.kisielewicz]@pwr.edu.pl}
\thanks{
{Supported in part by Polish NCN grant 2016/21/B/ST1/03079}}

\keywords{Distinguishing number of a graph, distinguishing number of a group, symmetric group}

\maketitle

\begin{abstract} 
The distinguishing number $D(G,X)$ of an action of a group $G$ on a set $X$ is the least size of a partition of $X$ such that no element of $G$ acting nontrivially on $X$ preserves this partition.
In this paper we describe the distinguishing numbers for all actions of the symmetric group  $S_n$, for any $n\geq 3$. This allows us to describe the distinguishing numbers for all graphs whose automorphism group is isomorphic with a symmetric group. Our description solves a few open problems posed by various authors in earlier papers on this topic.  
\end{abstract}

\section{Introduction} 
Let $G$ be a group acting on a set $X$. The \emph{distinguishing number} $D(G,X)$ of this action is the least number $d$ such that there exists a partition of $X$ into $d$ blocks with the property that the only elements in $G$ preserving this partition are those fixing all $x\in X$. The partition involved is often considered as a $d$-\emph{labeling} of $X$. Without loss of generality we may restrict to faithful actions of $G$ (cf. \cite{chan}). This means that we may restrict to the study of the distinguishing number $D(G,X)$ of permutation groups $G \leq Sym(X)$.

If $G=Aut(\G)$ is an automorphism group  of a graph $\G$, then $D(G,V(\G))$ is called the \emph{distinguishing number of} $\G$ and denoted $D(\G)$. Then it may be defined in terms of \emph{colorings} of graphs breaking the symmetry of a graph. In fact, this is how the distinguishing number has been originally introduced by Albertson and Collins in \cite{albe}. They also defined the \emph{distinguishing set} $D(G)$ for an abstract group $G$ as the set of distinguishing numbers of $D(\G)$ with $Aut(\G) \cong G$.

It was Tymoczko in \cite{tymo} who extended the concept for group actions (cf. also \cite{chan}). This turned out to be very fruitful when, in 2011 \cite{bail}, it was realized that in permutation group theory the problem had been investigated for many years as a part of the study of regular sets.

The investigation of the distinguishing number of the actions of the symmetric group $S_n$ has been initiated already in \cite{albe}, where it was proved, in Theorem~7, that the set $D(S_4) =\{2,4\}$. The authors made a conjecture that 
if $n\geq 4$, then $n-1 \notin D(S_n)$ (Conjecture~2).

In \cite[Theorem 4.2]{tymo} Tymoczko, verifying another conjecture in \cite{albe}, has proved that if $Aut(\G)=S_n$ for a graph $\G$ and $D(\G)=n$, then one orbit of $\G$ is a copy of $K_n$ or
$\overline K_n$ and the rest are $1$-orbits. 
She also asked if it is true that for arbitrarily large $n$ there exist faithful group actions of $S_n$ on $X$ with $D(S_n,X) = n-1$ (Question~3.2).

All this study involves analyzing action of groups on different orbits. 
In \cite[Proposition 1.2]{lieb} Liebeck has observed that if  $Aut(\G) \cong S_n$  for a graph $\G$, $n>6$, and all orbits of $Aut(\G)$  have size less then 
$\binom{n}{2}$, then all these orbits have size $n$ or $1$. Using this, 
in \cite[Theorem 4.2]{klav}, Klav\v{z}ar, Wong, and Zhu, have proved that for such graphs $D(G,V) =
\lceil \sqrt[k]{n}\rceil$, for some positive integer $k$.
They also proved some additional results supporting the conjecture that the assumption on the size of orbits can be removed (Conjecture 4.5).

In this paper we generalize all the mentioned results, verify Conjecture~3, Conjecture 4.5, and answer Question~3.2.  
Our main result is as follows:

\begin{Theorem} \label{main}
Let $(S_n,X)$ denote a faithful action of $S_n$  on a set~$X$ for some  $n\geq 3$. Then, either $D(S_n,X)=2$ or one of the following holds:
\begin{enumerate}
    \item  Each orbit has size $1, 2, n$ or $2n$,  and with 
$k$ and $r$ denoting the numbers of the orbits of size $n$ and $2n$, respectively,
 $$D(S_n,X)=\left\{
 \begin{array}{ll}
\left\lceil  \sqrt[k]{n}\; \right\rceil, & \textrm{if there is no orbit of size $2$ or $2n$},  \\
\lceil  \sqrt[k+2r]{n-1}\; \rceil, & \textrm{otherwise;}
 \end{array}\right.$$
\item $(S_n,X)$ corresponds to one of the exceptional permutation groups listed in Table~$1$.
\end{enumerate}
\end{Theorem}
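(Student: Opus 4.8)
The plan is to analyze the action orbit by orbit. Writing $X$ as a disjoint union of $S_n$-orbits, each orbit is a transitive $S_n$-set, hence isomorphic to the action of $S_n$ on the cosets $S_n/H$ of a subgroup $H$. A $d$-labeling of $X$ is the same as a choice of $d$-labelings of the individual orbits drawn from a common palette, and an element $g$ preserves the labeling precisely when it preserves each orbit's labeling; thus $D(S_n,X)$ is the least $d$ for which one can choose $d$-labelings of the orbits whose stabilizers in $S_n$ intersect trivially. First I would record, for each type of orbit, the subgroups arising as stabilizers of labelings: on a natural orbit $S_n/S_{n-1}\cong[n]$ a labeling gives a Young subgroup $\prod_i S_{c_i}$ (the $c_i$ being the sizes of the color classes); on a sign orbit $S_n/A_n$ the stabilizer is $S_n$ or $A_n$; and on a $2n$-orbit $S_n/A_{n-1}$, which I would first identify with the twisted product $\{\pm\}\times[n]$ via $g\cdot(\epsilon,i)=(\epsilon\,\mathrm{sgn}(g),g(i))$, a labeling with sheets $(a_i,b_i)$ has stabilizer whose even part preserves the partition of $[n]$ by the vectors $(a_i,b_i)$ and whose odd part consists of $g$ with $(a_{g i},b_{g i})=(b_i,a_i)$.

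The next step is the structural dichotomy. Using Liebeck's observation and the classification of small-index subgroups of $S_n$, the subgroups $H$ of index less than $\binom n2$ are (for $n$ outside a short list, and $n\ne 6$) exactly $S_n,A_n,S_{n-1},A_{n-1}$, giving orbit sizes $1,2,n,2n$; the finitely many remaining small and sporadic cases (including the exceptional degree-$6$ actions coming from the outer automorphism of $S_6$) are exactly those collected in Table~1. I would then prove that the presence of any \emph{large} orbit, of size at least $\binom n2$, already forces $D(S_n,X)=2$: it suffices to exhibit a $2$-labeling of one such orbit with trivial stabilizer and then label everything else arbitrarily. For this I would use a counting/motion argument, bounding the number of $2$-labelings fixed by a nontrivial $g$ by $2^{c(g)}$, where $c(g)$ is the number of cycles of $g$ on the orbit, and showing $2^{|O|}>\sum_{g\ne 1}2^{c(g)}$ because every nontrivial element moves a number of points that grows fast enough relative to $\log|S_n|\approx n\log n$ once the orbit is large. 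Carrying this counting through \emph{uniformly} over all subgroups of index at least $\binom n2$, and isolating the finitely many exceptions, is the main obstacle and the essential new ingredient beyond the earlier results, which all assumed every orbit to be small.

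It then remains to treat the case in which all orbits are small, which yields the explicit formula. Let $k,r,s$ be the numbers of orbits of size $n,2n,2$ respectively. The $k$ natural orbits supply $k$ coordinates, and each $2n$-orbit supplies two further coordinates through its two sheets, assembling into a map $\phi:[n]\to[d]^{k+2r}$; the common stabilizer's even part equals the stabilizer of $\phi$ intersected with $A_n$. I would first treat the no-sign case $s=r=0$: here the stabilizer is the full Young subgroup of $\phi:[n]\to[d]^k$, which is trivial if and only if $\phi$ is injective, giving $D=\lceil\sqrt[k]{n}\,\rceil$. When sign information is available ($r\ge 1$ or $s\ge 1$; note $k+2r\ge 1$ since faithfulness excludes $k=r=0$), a labeling kills every nontrivial even element exactly when $\phi$ separates all points with at most one class of size two, i.e.\ when $d^{k+2r}\ge n-1$; this gives the lower bound $D\ge\lceil\sqrt[k+2r]{n-1}\,\rceil$ directly, since any surviving even element preserves all labelings. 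For the matching upper bound I would take $\phi$ realizing $n-1$ distinct vectors and observe that, the even part being trivial, the stabilizer's odd part is either empty or a single involution $g_0$; I would then kill $g_0$ using the available sign, either by $2$-labeling a size-$2$ orbit (on which the odd $g_0$ interchanges the two colors) or by making the repeated vector asymmetric on some $2n$-orbit (so that $g_0$ fails the sheet-swap condition). Verifying that the small exceptional actions behave as recorded in Table~1 is a separate finite check. The delicate point throughout is the even/odd bookkeeping on the $2n$-orbits, which is what makes each such orbit contribute $2$ to the exponent while simultaneously furnishing the sign that lowers the base from $n$ to $n-1$.
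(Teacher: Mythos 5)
Your overall architecture matches the paper's: split on whether some orbit has size at least $\binom{n}{2}$, prove $D=2$ in that case, and otherwise reduce to orbit sizes $\{1,2,n,2n\}$ and compute $D$ by assembling label-vectors on $\{1,\dots,n\}$ with even/odd bookkeeping. Your small-orbit analysis is essentially the paper's argument in different notation: your ``sheets'' of a $2n$-orbit are exactly the paper's splitting of such an orbit into two $A_n$-orbits interchanged by odd elements; your lower bound via even elements stabilizing $\phi$ is the paper's observation that $(A_n,X)=A_n^{(k+2r)}\oplus I_t$ forces $d^{k+2r}\geq n-1$; and your ``one repeated pair plus killing the single odd involution $g_0$'' is the paper's modification trick for the boundary cases $n=d^{k+2r}$ and $n=d^{k+2r}+1$. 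That half is sound, as is relegating $n\leq 6$ to a finite check.

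The genuine gap is the large-orbit case, and you flagged it yourself without closing it. The paper does no counting there: it observes that for $H\not\leq A_n$ the action of $S_n$ on $S_n{:}H$ is quasiprimitive, while for $H\leq A_n$ the orbit splits into two halves on which the simple group $A_n$ acts quasiprimitively, and it then quotes the classification of quasiprimitive groups with distinguishing number greater than $2$ (Devillers--Morgan--Harper), together with an ad hoc two-coloring for the one true exception, $A_8$ on $15$ points, where $D(A_8,15)=3$ and hence no regular set exists on a single half. Your proposed replacement --- bound the number of $2$-labelings fixed by $g\neq 1$ by $2^{c(g)}$ and sum over $g$ --- cannot work in the naive form you state: the justification that ``every nontrivial element moves a number of points that grows fast enough relative to $\log|S_n|\approx n\log n$'' is false at precisely the critical degree. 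On the orbit of $2$-subsets (size exactly $\binom{n}{2}$) a transposition moves only $2(n-2)$ points, whereas the per-element bound $(n!-1)\cdot 2^{|O|-m/2}<2^{|O|}$ requires minimal degree $m>2\log_2 n!\approx 2n\log_2 n$. So one must stratify elements by their support on $\{1,\dots,n\}$ and prove fixed-point-ratio (or support) estimates uniformly over \emph{all} faithful transitive actions of degree at least $\binom{n}{2}$ --- not only the primitive ones, since $H$ need not be maximal, so results like Cameron--Neumann--Saxl do not apply directly --- and then isolate the exceptions. That uniform estimate is the actual content of the result you would be re-proving, and your sketch leaves it entirely open; as written, the proposal establishes the theorem only in the small-orbit case.
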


The table is given in Section~\ref{s:re}, where we also give a more detailed version of this result describing 
the structure of the corresponding permutation groups with the distinguishing number greater than $2$. 
Applying this to graphs we get that, essentially, there is only one exception. This is Petersen graph $\mathcal P$ on $10$ vertices with $Aut(\mathcal P)\cong S_5$ and $D(\mathcal P)=3$. Obviously, the complement of $\mathcal P$ has the same parameters. Moreover, if $\mathcal{P}$ (or its complement) is extended with some fixed points so that the automorphism group remains the same (in particular, each fixed point is joined to all or no vertices of $\mathcal{P}$), then the resulted graph  has the same  distinguishing number $D=3$, as well. The general result is the following.

\begin{Theorem} \label{main2}
Let $\G$ be a graph with $Aut(\G)\cong  S_n$. Then either $D(\G)=2$ or one of the following holds:
\begin{enumerate}
\item  $Aut(\G)$ has only orbits of size $n$ or $1$, and  $D(\G)=\left\lceil  \sqrt[k]{n} \; \right\rceil$, where $k$  is the number of the orbits of size $n$;
\item $\G$ is the Petersen graph or its complement, extended  possibly with some fixed points as described above.
\end{enumerate}
\end{Theorem}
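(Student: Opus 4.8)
The plan is to deduce Theorem~\ref{main2} from the main classification in Theorem~\ref{main} by restricting attention to those faithful actions of $S_n$ that can actually arise as the action of $Aut(\G)$ on the vertex set of a graph $\G$. The key additional constraint, compared with the abstract setting of Theorem~\ref{main}, is that the permutation group must be the \emph{full} automorphism group of some graph on $V(\G)$; this is exactly a $2$-closure condition in the sense of Wielandt. So the first step is to go through the list of possible orbit structures allowed by Theorem~\ref{main} and ask, for each, whether there is a graph whose automorphism group is precisely $S_n$ realizing that action. For the generic case this amounts to checking that the natural actions on orbits of size $1$ and $n$ (together with the induced edge-colorings between them) can be assembled into a graph without creating extra automorphisms, which is where the hypothesis $D(\G)\neq 2$ and the formula $\lceil\sqrt[k]{n}\rceil$ from Theorem~\ref{main}(1) carry over verbatim.

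First I would isolate the orbit types. By Theorem~\ref{main}, if $D(\G)>2$ then every orbit has size $1,2,n$, or $2n$, so the work reduces to showing that for graphs the orbits of size $2$ and $2n$ essentially cannot occur (outside the Petersen exception), thereby collapsing the second branch of the formula into the first. The orbit of size $2$ corresponds to $S_n$ acting through its sign homomorphism onto $S_2$; I would argue that any graph structure on such a $2$-element orbit is preserved by all of $S_n$ only if the action on that orbit is not faithfully reflected, and more importantly that combining a size-$2$ orbit with the rest forces either $D(\G)=2$ or produces automorphisms outside $S_n$. The size-$2n$ orbits correspond to the action on ordered pairs or cosets of index-$2$ subgroups; here I would invoke the structure of the exceptional groups in Table~$1$ and Liebeck's observation (cited as \cite[Proposition 1.2]{lieb}) that for $n>6$ small orbits are forced to have size $n$ or $1$, to rule these out as genuine graph automorphism groups except in the low-$n$ exceptional range.

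The next step handles the exceptional groups of part~(2) of Theorem~\ref{main} directly: for each entry of Table~$1$ I would determine which ones are $2$-closed, i.e. arise as $Aut(\G)$ for an actual graph, and compute $D(\G)$ in those cases. The expectation, stated already in the excerpt, is that the only surviving exception is the Petersen graph $\mathcal P$ (and its complement) with $Aut(\mathcal P)\cong S_5$ and $D(\mathcal P)=3$, possibly decorated with fixed points. I would verify that adding a fixed point joined to all or to none of the vertices does not enlarge the automorphism group beyond $S_5$ and does not change the distinguishing number, which is a short and routine check once the core Petersen case is settled.

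The main obstacle I anticipate is the rigorous elimination of orbits of size $2$ and $2n$ in the graph setting: one must show not merely that such an action \emph{has} a given distinguishing number, but that \emph{no} graph realizes $S_n$ as its full automorphism group with such an orbit present while $D(\G)>2$. This requires a careful $2$-closure analysis --- ruling out that the edges among and between these orbits could be arranged so that $S_n$ is exactly the stabilizer of the edge set --- and it is precisely here that the discrepancy between the two branches of the formula in Theorem~\ref{main}(1) must be shown to vanish for graphs. I would expect to lean on the detailed structural description promised in Section~\ref{s:re} to carry out this case analysis cleanly, rather than re-deriving the permutation-group facts from scratch.
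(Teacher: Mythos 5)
Your proposal is correct and follows essentially the same route as the paper: use Theorem~\ref{main} (whose proof shows that $D>2$ forces every orbit to have size less than $\binom{n}{2}$) together with Liebeck's \cite[Proposition~1.2]{lieb} to conclude that for $n>6$ all orbits have size $n$ or $1$, so only the first branch of the formula survives, and then eliminate the $n\leq 6$ exceptions of Table~1 by a finite check (the paper uses GAP) that none of them except $Aut(\mathcal P)$, possibly extended by fixed points, is the full automorphism group of a graph. One remark: the ``main obstacle'' you anticipate --- a careful $2$-closure analysis to rule out orbits of size $2$ and $2n$ --- is not actually needed, since Liebeck's proposition is already a statement about full automorphism groups of graphs and performs exactly that elimination for $n>6$, while for $n\leq 6$ the finitely many candidate groups are checked directly.
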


Theorem~\ref{main2} above extends \cite[Theorem~4.2]{klav} and verifies positively  \cite[Conjecture~4.5]{klav}.  In turn, \cite[Question 3.2]{tymo} is settled positively by Theorem~\ref{main}. Indeed, it is enough to take a faithful action of $S_n$ with one orbit of size $n$, one orbit of size $2$ and no orbit of size $2n$. We show in Section~\ref{s:re} that there are such actions for every $n\geq 3$. Then, by Theorem~\ref{main}, for $n>3$, we have $D(S_n,X) = n-1$. This is not the case for automorphism groups of graphs: here, if $n > 3$, then $n-1 \notin D(S_n)$, which settles   \cite[Conjecture~2]{albe} posed 25 years ago.

Liebeck \cite{lieb} has described the structure of the graphs $\G$ with $Aut(\G)\cong S_n$ and orbits only of size $n$ or $1$. Each such graph must be a disjoint union of $k\geq 1$ components, each of which is a clique or an anti-clique of size $n$, and an arbitrary number $t\geq 0$ of fixed points. There are only few types of connections possible between these components, and some additional conditions have to hold (we refer the reader to \cite{lieb} for details). Anyway, Petersen graph in this context has a really exceptional structure.

The rest of the paper is organized as follows. In Section~\ref{s:ss} we recall the concept of a subdirect sum of permutation groups and the structure theorem on intransitive permutation groups, which is the main tool allowing us to solve all mentioned problems. In Section~\ref{s:fa} we establish some more general facts on 
faithful actions of groups we use later in our proof. Section~\ref{s:pr} contains the proof of Theorem~\ref{main}  for $n>6$. For $n\leq 6$ exceptions arise, and we deal with this small cases in Section~\ref{s:re}, with the help of a computational system.

For more detailed information on other research concerning distinguishing numbers the reader is referred to \cite{gksim}.

\section{Subdirect sum of permutation groups} \label{s:ss}

As we have mentioned, in the study of the distinguishing number we face a simultaneous action of the automorphism group of a graph (or an abstract group) on various orbits, and we need to know how the actions on various orbits are interrelated.
In this section we recall the \emph{subdirect sum of permutation groups} and the general structure theorem for intransitive permutation groups. An additional aim of this paper is to show that this special knowledge allows us to approach various problems that remained unsolved so far.

All permutation groups we consider are finite and considered up to \emph{permutation isomorphism} \cite[p.~17]{DM} (i.e., two permutation groups that differ only in labeling of points are treated as equal).

Given two permutation groups $G \leq Sym(X)$ and $H \leq Sym(Y)$, the \emph{direct sum} $G\oplus H$ is the permutation group on the \emph{disjoint}
union $X \cup Y$ defined as the set of all permutations $(g,h)$, $g\in G, h\in H$ such that (writing permutations on the right)
$$
x(g,h) =
\left\{\begin{array}{ll}
xg, & \mbox{if } x\in X\\
xh, & \mbox{if } x\in Y
\end{array}\right.
$$
Thus, in $G\oplus H$, permutations of $G$ and $H$ act independently in a natural way on the disjoint union of the underlying sets.
 
We define the notion of the \emph{subdirect sum} following \cite{GK1} (and the notion of \emph{intransitive product} in \cite{kis1}).
Let $H_1\unlhd\; G_1 \leq S_n$ and $H_2\unlhd\; G_2\leq S_m$ be permutation groups such that $H_1$ and $H_2$ are normal subgroups of $G_1$ and $G_2$, respectively. Suppose, in addition, that factor groups $G_1/H_1$ and $G_2/H_2$ are (abstractly) isomorphic and $\phi : G_1/H_1 \to G_2/H_2$ is the isomorphism mapping. Then, by
$$
G = G_1[H_1] \oplus_\phi G_2[H_2]
$$
we denote the subgroup of $G_1 \oplus G_2$ consisting of all permutations $(g,h)$, $g\in G_1, h\in G_2$, such that $\phi(H_1g) = H_2h$.
Each such group will be called the \emph{subdirect sum} of $G_1$ and $G_2$. It is easily checked that such a set of permutations forms a permutation group.
 
If $H_1=G_1$ and $H_2=G_2$, then $G = G_1 \oplus G_2$ is the usual direct sum of $G_1$ and $G_2$.
If $H_1$ and $H_2$ are trivial one-element subgroups, then $\phi$ is an isomorphism of $G_1$ onto $G_2$, and the sum is called, in such a case, the \emph{parallel sum} of $G_1$ and $G_2$. Then the elements of $G$ are of the form $(g,\phi(g))$, $g\in G_1$, and both the groups act in a parallel manner on their sets \emph{via} isomorphism $\phi$. In general, the trivial permutation group on $n$ points is denoted $I_n$. But in this case we omit trivial subgroups in brackets and use the notation $G=G_1\oplus_\phi G_2 = G_1[I_n]\oplus_\phi G_2[I_m]$, where $\phi$ is an (abstract) isomorphism between $G_1$ and $G_2$. (In other papers, also the notation $G=G_1|| G_2$, whenever there is no need to refer to $\phi$).
We emphasize that $G_1$ and $G_2$ need to be abstractly isomorphic, but not necessarily permutation isomorphic, and they may act on sets of different cardinalities with $n\neq m$.
 
In the special case when, in addition, $G_1=G_2=G$ and $\phi$ is the identity, we write $G^{(2)}$ for $G\oplus_\phi G$. More generally, for $r\geq 2$, by $G^{(r)}$ we denote the permutation group in which the group $G$ acts in the parallel way (\emph{via} the identity isomorphisms) on $r$ disjoint copies of a set $X$. This group is called
the \emph{parallel multiple} of $G$. In particular, we admit $r=1$ and put $G^{(1)}=G$.
For example, the cyclic group generated by the permutation
$g = (1,2,3)(4,5,6)(7,8,9)$ is permutation isomorphic to the parallel multiple $C_3^{(3)}$, where $C_3$ denotes the permutation group on $\{1,2,3\}$ generated by the cycle $(1,2,3)$.
 
The main fact established in \cite{kis1} is that every intransitive group has the form of a subdirect sum, and its components can be easily described. Let $G$ be an intransitive group acting on a set $X = X_1 \cup X_2$ in such a way that $X_1$ and $X_2$ are disjoint fixed blocks of $G$. Let $G_1$ and $G_2$ be restrictions of $G$ to the sets $X_1$ and $X_2$, respectively (they are called \emph{constituents}). Let $H_1 \leq G_1$ and $H_2 \leq G_2$ be the subgroups fixing pointwise $X_2$ and $X_1$, respectively. Then we have
 
\begin{Theorem} \cite[Theorem 4.1]{kis1} \label{intr}
If $G$ is a permutation group as described above, then
$H_1$ and $H_2$ are normal subgroups of $G_1$ and $G_2$, respectively,
the factor groups $G_1/H_1$ and $G_2/H_2$ are abstractly isomorphic, and
$$G = G_1[H_1] \oplus_\phi G_2[H_2],$$
where $\phi$ is an isomorphism of the factor groups.
\end{Theorem}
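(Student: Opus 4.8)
The plan is to recognize this statement as Goursat's lemma dressed in the language of constituents, and to supply the few verifications that translate between the two settings. First I would introduce the two restriction maps $\pi_1\colon G\to G_1$ and $\pi_2\colon G\to G_2$, sending $g$ to $g|_{X_1}$ and $g|_{X_2}$ respectively. Each is a surjective homomorphism, surjective precisely because $G_1$ and $G_2$ are \emph{defined} as the full restrictions (constituents) of $G$. The combined map $g\mapsto(\pi_1(g),\pi_2(g))$ embeds $G$ into $G_1\oplus G_2$: it is injective, since an element fixing every point of $X=X_1\cup X_2$ is the identity, and its two coordinate projections are onto, so the image is a \emph{subdirect} product of $G_1$ and $G_2$. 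Under this identification the subgroups in the statement become $N_1:=\ker\pi_2=\{g\in G:g|_{X_2}=\mathrm{id}\}$ and $N_2:=\ker\pi_1$, with $H_1=\pi_1(N_1)$ and $H_2=\pi_2(N_2)$; these are exactly the elements of $G_i$ arising from permutations that fix the other block pointwise.

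Normality I would dispatch immediately: since $N_1\unlhd G$ and $\pi_1$ is a surjective homomorphism, $H_1=\pi_1(N_1)\unlhd\pi_1(G)=G_1$, and symmetrically $H_2\unlhd G_2$. For the isomorphism of the factor groups, the key computation is $\pi_1^{-1}(H_1)=N_1N_2=\pi_2^{-1}(H_2)$. The inclusion $N_1N_2\subseteq\pi_1^{-1}(H_1)$ is clear, because $\pi_1(N_2)=\{\mathrm{id}\}$ and $\pi_1(N_1)=H_1$; conversely, if $\pi_1(g)\in H_1=\pi_1(N_1)$ then $\pi_1(gh^{-1})=\mathrm{id}$ for some $h\in N_1$, whence $gh^{-1}\in N_2$ and $g\in N_2N_1=N_1N_2$ (the two normal subgroups commute elementwise, as $N_1\cap N_2=\{\mathrm{id}\}$ forces their commutators to be trivial). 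Consequently $g\mapsto H_1\pi_1(g)$ and $g\mapsto H_2\pi_2(g)$ are two surjections of $G$ onto $G_1/H_1$ and $G_2/H_2$ sharing the common kernel $N_1N_2$, so both quotients are isomorphic to $G/(N_1N_2)$; composing these gives the desired isomorphism $\phi\colon G_1/H_1\to G_2/H_2$ determined by $\phi(H_1\pi_1(g))=H_2\pi_2(g)$.

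Finally I would verify $G=G_1[H_1]\oplus_\phi G_2[H_2]$, i.e. that $G$ is exactly the set of pairs $(a,b)$ with $\phi(H_1a)=H_2b$. The inclusion $\subseteq$ is immediate from the defining property of $\phi$ applied to $(a,b)=(\pi_1(g),\pi_2(g))$. For $\supseteq$, given such a pair, I choose $g\in G$ with $\pi_1(g)=a$ (surjectivity of $\pi_1$); then $H_2b=\phi(H_1a)=H_2\pi_2(g)$, so $b\,\pi_2(g)^{-1}\in H_2=\pi_2(N_2)$, and picking $h\in N_2$ with $\pi_2(h)=b\,\pi_2(g)^{-1}$ one checks that $hg\in G$ satisfies $\pi_1(hg)=a$ (as $h\in\ker\pi_1$) and $\pi_2(hg)=\pi_2(h)\pi_2(g)=b$. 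Hence $(a,b)$ indeed arises from an element of $G$, completing the equality.

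I expect the only genuinely delicate point to be the identity $\pi_1^{-1}(H_1)=N_1N_2=\pi_2^{-1}(H_2)$, which is precisely the content of Goursat's lemma and is what forces the two factor groups to be isomorphic; everything else is bookkeeping with the restriction homomorphisms. One must also keep the convention for composing permutations consistent throughout, so that the lift $hg$ in the last step (rather than $gh$) has the claimed restrictions.
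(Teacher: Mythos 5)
Your proof is correct. Note that this paper never proves the statement itself: Theorem~\ref{intr} is quoted, with attribution, from \cite[Theorem 4.1]{kis1}, so there is no in-paper argument to compare yours against. What you have written --- identifying the constituents $G_1,G_2$ as images of the restriction homomorphisms $\pi_1,\pi_2$, the subgroups $H_i$ as images of the kernels $N_1=\ker\pi_2$, $N_2=\ker\pi_1$, proving the key identity $\pi_1^{-1}(H_1)=N_1N_2=\pi_2^{-1}(H_2)$, and concluding that both factor groups are canonically isomorphic to $G/(N_1N_2)$, which yields $\phi$ and the equality $G=G_1[H_1]\oplus_\phi G_2[H_2]$ --- is exactly the standard Goursat-type argument that underlies the cited structure theorem for intransitive permutation groups, and all the delicate points (faithfulness giving $N_1\cap N_2=\{\mathrm{id}\}$, well-definedness of $\phi$ via the common kernel, and the lifting step in the reverse inclusion) are handled correctly.
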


Note that the operator $\oplus_\phi$ is commutative in the sense that $X$ may be written as $X = X_2 \cup X_1$, as well, and $G = G_2[H_2] \oplus_{\phi^{-1}} G_1[H_1]$. In case when $H_1$ or $H_2$ are trivial we omit them in notation, as for the parallel sum above.

There is some subtlety regarding the parallel powers we have to be aware. A well-known fact is that the automorphism group $Aut(G)$ of a group $G$ may have outer automorphisms that are not given by the conjugation action of an element of $G$. In the case of permutation groups $G\leq Sym(X)$ some outer automorphism may still be given by the conjugation action of an element in $Sym(X)$. This corresponds generally to permuting elements of $X$, and such automorphisms are called \emph{permutation automorphisms}. They form a subgroup of $Aut(G)$, which we denote by $PAut(G)$. Often $PAut(G)=Aut(G)$, but some permutation groups have also other automorphisms, which we will call \emph{nonpermutation} automorphisms.
 
Now, if $G = H\oplus_\psi H$, for some permutation group $H$, where $\psi\in PAut(H)$, then $G$ is permutation isomorphic to $H^{(2)}$ (i.e., $G=H^{(2)}$, according to our convention). If $\psi$ is a nonpermutation automorphism, then $G \neq H^{(2)}$. (More precisely, we should speak here about isomorphisms induced by automorphisms and make distinction between base sets of components, but we assume that this is contained in the notion of the \emph{disjoint union}, and we will make it explicit only when the need arises). An example is the symmetric group $S_6$ that has a nonpermutation automorphism $\psi$ (cf. \cite{CL}). Then, $S_6^{(2)}$ and $S_6 \oplus_\psi S_6$ are not permutation isomorphic.

This is an exception, since it is known that for $n\neq 6$, $S_n$ has no nonpermutation automorphism, and for each automorphism $\phi$,  $S_n \oplus_\phi S_n = S_n^{(2)}$.

\section{Faithful actions of groups}\label{s:fa}

Now, using the results of the previous section, we establish some simple fact concerning faithful actions of groups, and in particular, actions of the symmetric group $S_n$.

Let $H$ be a subgroup of a group $G$. By $G:H$ we denote the right coset space $G:H = \{Hg : g\in G\}$. 
It is a well-known fact that if a group $G$ acts on a set $X$ transitively, then its action is equivalent to the action of $G$ on the coset space $G:H$ for some  $H\leq G$. This action is faithful if and only if $H$ contains no nontrivial normal subgroup of $G$.

We wish to characterize intransitive actions. 
Faithful actions may be identified with the corresponding permutation groups. To proceed with $S_n$ we need first two simple general facts.

In general, an intransitive permutation group with $k$ orbits may be written symbolically as 
$$G =G_1[H_1] \oplus_{\phi_1} \ldots \oplus_{\phi_{k-1}} G_k[H_k].$$ 
As we have noted, the order of the summands may be chosen here arbitrarily. Also, to make the notation formal, and to establish the order of operations, we need to put suitable brackets, and this may be also done to put arbitrary order. Changing this order changes usually actual isomorphisms $\phi_1,\ldots, \phi_{k-1}$, and the normal subgroups $H_1,\ldots,H_k$ in the notation, but the constituents $G_1,\ldots,G_k$ remain, in any case, unchanged.

It is well known that if group  $G$ acts on a set $X$, and $Y$ is an orbit in this action, then the action of $G$ on $Y$ is equivalent to a faithful action of the factor group $G/H$ on $Y$ for some normal subgroup $H\unlhd G$ ($H$ is actually the kernel of the action). We also have the following converse.

\begin{Lemma} \label{f:a}
Let $G$ be a group and $H$ its normal subgroup. Let $(G/H, Y)$ be a faithful action of $G/H$ on a set $Y$.  Then there exists a faithful intransitive action of $G$ on a set $X\supseteq Y$ such that $Y$ is an orbit in this action and the action of $G$ on $Y$ is equivalent to $(G/H, Y)$.  
\end{Lemma}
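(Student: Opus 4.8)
The plan is to realize the given action $(G/H,Y)$ as one orbit of a larger, faithful $G$-action, obtained by adjoining a single extra orbit on which $G$ already acts faithfully. The role of the extra orbit is to \emph{absorb} the kernel $H$: when $G$ acts on $Y$ by pulling the given action back along the quotient $G\to G/H$, the kernel of this action is exactly $H$, so it is faithful only as an action of $G/H$. I therefore need additional points that are moved by every nontrivial element of $H$.

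First I would let $Z$ be a copy of the underlying set of $G$ and let $G$ act on $Z$ by the right regular representation $z\mapsto zg$; this action is transitive and faithful. I then set $X = Y\cup Z$ (disjoint union) and define an action of $G$ on $X$ by letting $g\in G$ act on $Y$ as the element $Hg\in G/H$ acts in the given action, and act on $Z$ by right multiplication. This is a well-defined homomorphism $G\to Sym(X)$, and both $Y$ and $Z$ are $G$-invariant, so the action is intransitive; $Z$ is a single orbit, and $Y$ is a single orbit since the orbits of $G$ on $Y$ coincide with those of $G/H$ and $(G/H,Y)$ is transitive (as the requirement that $Y$ be an orbit presupposes). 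The crux, faithfulness, is then immediate from the choice of $Z$: if $g$ lies in the kernel of the action on $X$, then $g$ fixes $Y$ pointwise, whence $Hg=H$ and $g\in H$, and $g$ also fixes $Z$ pointwise, whence $g=1$ because the regular representation is faithful. Thus the kernel is trivial, the action of $G$ on $X$ is faithful, and by construction its restriction to the orbit $Y$ is equivalent to $(G/H,Y)$.

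Finally I would observe that this construction is precisely a subdirect sum in the sense of Section~\ref{s:ss}: writing $G_1\le Sym(Y)$ for the faithful image of $G/H$ and $G_2\le Sym(Z)$ for the regular representation, and taking $H_2\le G_2$ to be the image of $H$, one has $G_2/H_2\cong G/H = G_1$, and the group built above is $G_1\oplus_\phi G_2[H_2]$ with $\phi$ the identity isomorphism. By Theorem~\ref{intr} this is again a permutation group, and $g\mapsto (Hg,g)$ identifies it with $G$. I do not expect a genuine obstacle here: the single idea is the adjunction of one faithful orbit to kill $H$, and the remaining verifications are routine.
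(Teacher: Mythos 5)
Your proof is correct and follows essentially the same route as the paper: the paper takes an arbitrary transitive (faithful) permutation group $N$ isomorphic to $G$ and forms the subdirect sum $N[H]\oplus_\phi N'$ with the permutation group $N'$ realizing $(G/H,Y)$, which is exactly your construction with $N$ specialized to the right regular representation. Your explicit kernel argument and your closing identification with the subdirect sum of Section~\ref{s:ss} match the paper's reasoning that each $g\in N$ pairs with a unique $h\in N'$, so $M\cong G$.
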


\begin{proof}
Let $N$ be an arbitrary transitive permutation group isomorphic to $G$, an $N'$ the permutation group corresponding to $(G/H,Y)$.
Consider the permutation group $$M=N[H] \oplus_\phi N' \; (\; = N[H] \oplus_\phi N'[I_m]).$$ 
As $N/H \cong N'$, the definition is correct. Moreover, for every permutation $g\in N$, there is precisely one permutation $h\in N'$ such that $(g,h)\in M$. Consequently, $M\cong G$. As the second constituent $N'\cong G/H$, the result follows.  
\end{proof}

The permutation group $M$ constructed in the proof above has the property that one of the constituents $N$ is isomorphic to the group itself. Then every permutation on the orbit corresponding to $N$ determines uniquely permutations on other orbits. In terms of the action of a group $G$ on a set $X$ this means that if the action of $G$ on one of the orbits is faithful then the action on this orbits determines uniquely the action on other orbits. 
This leads to the following corollary on distinguishing numbers.

\begin{Lemma}\label{f:d}
Let a group $G$ act on a set $X$ and let $Y\subseteq X$ be a set preserved in this action. If the action of $G$ on $Y$ is faithful, then $D(G,X)\leq D(G,Y)$.
\end{Lemma}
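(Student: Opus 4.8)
The plan is to manufacture a distinguishing partition of $X$ directly from an optimal one of $Y$, by pouring all the surplus points into a single existing block. Write $d=D(G,Y)$ and fix a partition $P_1,\ldots,P_d$ of $Y$ whose only $G$-preserving elements are those fixing $Y$ pointwise; since the action on $Y$ is faithful, these are exactly the identity. I would then define a partition $Q_1,\ldots,Q_d$ of $X$ by $Q_1=P_1\cup(X\setminus Y)$ and $Q_i=P_i$ for $i\geq 2$. This partition has $d$ blocks, so the whole task reduces to showing that any $g\in G$ preserving it fixes all of $X$, which yields $D(G,X)\leq d$.

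The argument I would run rests on the invariance of $Y$: every $g\in G$ maps $Y$ onto $Y$ and $X\setminus Y$ onto itself. Suppose $g$ preserves $Q_1,\ldots,Q_d$. For $i\geq 2$ the block $Q_i=P_i$ lies inside $Y$, so $g(Q_i)$ lies inside $Y$ as well; but among the blocks only $Q_2,\ldots,Q_d$ are contained in $Y$, because $Q_1$ properly contains the external points $X\setminus Y$. Hence $g$ permutes $\{Q_2,\ldots,Q_d\}$ among themselves and, being a bijection of blocks, must fix $Q_1$. I would then deduce that the restriction $g|_Y$ preserves $P_1,\ldots,P_d$: the blocks $P_2,\ldots,P_d$ are permuted among themselves, while $g(P_1)=g(Q_1\cap Y)=g(Q_1)\cap Y=Q_1\cap Y=P_1$ by invariance of $Y$. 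The distinguishing property of $P$ then forces $g$ to fix $Y$ pointwise, and faithfulness on $Y$ upgrades this to $g$ being the identity of $G$, so $g$ fixes all of $X$.

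The only point that genuinely needs care — and the sole place where the hypotheses enter — is the claim that the overloaded block $Q_1$ cannot be interchanged with a pure block $Q_i$. This is exactly what invariance of $Y$ buys: it prevents $g$ from moving the external points $X\setminus Y$ into $Y$, so the unique block meeting $X\setminus Y$ is pinned down. I do not expect this to be a serious obstacle, but it is the crux; the remainder is bookkeeping. The degenerate cases are immediate: if $Y=X$ the inequality is trivial, and if $X\setminus Y=\emptyset$ the construction simply returns $P$.
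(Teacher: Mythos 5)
Your proof is correct and follows essentially the same route the paper intends: the paper states this lemma as an immediate corollary of the observation that faithfulness on $Y$ means any symmetry-breaking of $Y$ breaks all of $G$, which is exactly your argument of extending a distinguishing partition of $Y$ to $X$ and invoking faithfulness to upgrade ``fixes $Y$ pointwise'' to ``is the identity.'' Your extra care in pooling $X\setminus Y$ into a single block and using invariance of $Y$ to pin that block is a sound way to handle the case where ``preserving a partition'' is read as permuting blocks rather than fixing each block setwise.
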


Now, we apply the above consideration to the case $G=S_n$. For $n\neq 4$, $S_n$ has one nontrivial normal subgroup $H=A_n$, and two trivial ones: $H=Id$ and $H=S_n$. So any constituent of $S_n$ has to be isomorphic to: $S_n$, $S_2$ or $S_1$.
Since all transitive actions of $S_n$ are faithful except for action equivalent to that on $S_n : A_n$ or $S_n : S_n$, it follows that
either the orbit is of size $\geq n$, and the action of $S_n$ on such an orbit is faithful or the orbit is of size $2$ or $1$, and the action of $S_n$ on these orbits is equivalent to the action of $S_2$ or $S_1$. The latter are just fixed points of the action. It follows that if $(S_n, X)$ is a faithful action of $S_n$ on $X$, then there is at least one orbit $X_i$ in this action of size at least $n$.

In the case of $G=S_4$ there is one more normal subgroup $K_4$. We have $S_4/K_4 \cong S_3$. So, we have in addition two transitive actions of $S_3$ on $3$ and on $6$ points. If $S_4$ is a faithful action on $X$, then since  $K_4 < A_4$, there must be at least one orbit with the constituent $S_4$. 
Thus we have

\begin{Lemma} \label{f:f}
If $S_n$ acts faithfully 
on a set $X$ and $n\geq 3$, then there is an orbit $Y$ of size $\geq n$ in this action on which $S_n$ acts faithfully.  
\end{Lemma}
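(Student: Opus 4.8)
The plan is to combine the orbit decomposition of a group action with the classification of the normal subgroups of $S_n$, most of which has in fact been assembled in the paragraphs immediately preceding the statement. First I would write the faithful action $(S_n,X)$ as a union of its orbits $X_1,\ldots,X_k$ and invoke the standard fact (recalled just above) that the action of $S_n$ on each orbit $X_i$ is equivalent to a faithful action of the quotient $S_n/K_i$, where $K_i\trianglelefteq S_n$ is the kernel of the restricted action; equivalently, Theorem~\ref{intr} exhibits $G$ as a subdirect sum whose constituent on $X_i$ is (permutation isomorphic to) the faithful action of $S_n/K_i$. Since each $K_i$ is normal in $S_n$, I would then apply the classification: for $n\neq 4$ the only normal subgroups are $\{e\}$, $A_n$, $S_n$, so each constituent is one of $S_n$, $S_2=S_n/A_n$, $S_1=S_n/S_n$; for $n=4$ the extra normal subgroup $K_4$ contributes the additional constituent $S_3=S_4/K_4$.

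Next I would read off the orbit sizes from the constituents. The constituent $S_n$ arises exactly when $K_i=\{e\}$, i.e.\ exactly when $S_n$ acts faithfully on $X_i$; a faithful transitive action of $S_n$ has degree at least the minimal faithful permutation degree $n$, so such an orbit satisfies $|X_i|\geq n$. All remaining constituents correspond to non-faithful restrictions: $S_1$ gives a fixed point, $S_2$ a $2$-point block, and (when $n=4$) $S_3$ a block of size $3$ or $6$. That a faithful orbit has size $\geq n$ is precisely the degree bound already used above, following from the fact that the only non-faithful transitive actions of $S_n$ are those on $S_n:A_n$ and $S_n:S_n$ (together with $S_n:K_4$ when $n=4$).

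The crux is to guarantee that at least one orbit $Y$ carries the faithful constituent $S_n$. Here I would argue by contradiction from the faithfulness of the whole action, which says $\bigcap_i K_i=\{e\}$. If no constituent were $S_n$, then every $K_i$ would be a nontrivial normal subgroup of $S_n$. The decisive point is that the normal subgroups of $S_n$ form a chain with a smallest nontrivial member $M$ — namely $A_n$ for $n\neq 4$ and $K_4$ for $n=4$ (recall $\{e\}\subset K_4\subset A_4\subset S_4$) — which is therefore contained in every nontrivial normal subgroup. Hence $M\subseteq\bigcap_i K_i$, and as $M\neq\{e\}$ for all $n\geq 3$, this contradicts faithfulness. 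Consequently some orbit $Y$ has $K_i=\{e\}$, the action of $S_n$ on $Y$ is faithful, and by the degree bound $|Y|\geq n$, which is exactly the assertion.

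I expect the only delicate point to be the case $n=4$, where the extra normal subgroup $K_4$ enlarges the list of possible constituents. But since the normal subgroups of $S_4$ still form the chain $\{e\}\subset K_4\subset A_4\subset S_4$ with least nontrivial element $K_4$, the contradiction argument transfers verbatim with $M=K_4$ in place of $M=A_n$. The remaining ingredient I would make sure to state cleanly is the minimal faithful transitive degree bound, so that the faithful orbit is certified to have size at least $n$ rather than merely being nontrivial.
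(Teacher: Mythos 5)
Your proof is correct and follows essentially the same route as the paper: decompose into orbits, use the classification of normal subgroups of $S_n$ (including $K_4$ when $n=4$) to identify the possible constituents, and observe that since the nontrivial normal subgroups form a chain, faithfulness of the whole action forces some orbit kernel to be trivial, whence that orbit has size $\geq n$ by the minimal faithful degree bound. The paper compresses exactly this argument into the remarks preceding the lemma (for $n=4$ it literally invokes $K_4 < A_4$), so there is no substantive difference.
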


Note that if $S_n$ acts transitively on a set of $n$ points, then the only possible action up to equivalence is $(S_n,S_n:S_{n-1})$, which is the natural action of $S_n$. So, if all the orbits of a permutation group $G$ abstractly isomorphic to $S_n$  are of size  $n$, then $G$ acts faithfully on each orbit, and the action on one orbit determines the action on other orbits.  Consequently, $G = S_n^{(k)}$ is a parallel power (with exception of $n=6$, when some of components may be related by a nonpermutation automorphism; see the end of Section~\ref{s:ss}). This result may be extended for the case with fixed points.

\begin{Lemma}\label{f:n}
If $G$ is a  permutation group abstractly isomorphic to $S_n$, $n\neq 6$, and all orbits of $G$ are of size $n$ or $1$, then  $G = S_n^{(k)} \oplus I_t$, where $k$ is the number of orbits of size $n$ and $t$ is the number of fixed points. 
\end{Lemma}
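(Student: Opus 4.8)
The plan is to separate the fixed points from the orbits of size $n$, show that the fixed points split off as a direct summand $I_t$, and then invoke the transitive-orbit case established in the discussion preceding this lemma.

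First I would partition the base set as $X = W \cup Z$, where $W$ is the union of the $k$ orbits of size $n$ and $Z$ is the set of the $t$ fixed points. Both $W$ and $Z$ are blocks fixed by $G$, so Theorem~\ref{intr} applies: writing $G_W, G_Z$ for the constituents on $W, Z$ and $H_W \unlhd G_W$, $H_Z \unlhd G_Z$ for the subgroups fixing the complementary block pointwise, we obtain $G = G_W[H_W] \oplus_\phi G_Z[H_Z]$ with $\phi$ an isomorphism $G_W/H_W \to G_Z/H_Z$.

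The key step is to analyze the fixed-point block $Z$. Since every point of $Z$ is fixed by $G$, the constituent $G_Z$ is the trivial group $I_t$; consequently $H_Z = G_Z = I_t$ and the factor group $G_Z/H_Z$ is trivial. Through the isomorphism $\phi$ this forces $G_W/H_W$ to be trivial as well, i.e. $H_W = G_W$. Hence the subdirect sum degenerates to a genuine direct sum $G = G_W \oplus I_t$. It then remains to identify $G_W$: the restriction homomorphism $G \to G_W$ is injective, because an element of $G$ acting trivially on $W$ also acts trivially on $Z$ (the points of $Z$ being fixed) and is therefore the identity; being surjective by definition of the constituent, it is an isomorphism, so $G_W \cong S_n$ and every orbit of $G_W$ has size $n$. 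Applying the result established in the discussion preceding this lemma---that a permutation group abstractly isomorphic to $S_n$ all of whose orbits have size $n$ equals the parallel power $S_n^{(k)}$ when $n \neq 6$---I conclude $G_W = S_n^{(k)}$, and therefore $G = S_n^{(k)} \oplus I_t$.

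The main obstacle is the clean reduction to the transitive-orbit case: one must verify that the fixed points contribute nothing but a direct summand $I_t$, which hinges on the triviality of the constituent on $Z$ forcing $H_W = G_W$ via the structure theorem. The hypothesis $n \neq 6$ enters only at the final step, where the absence of nonpermutation automorphisms of $S_n$ guarantees that the parallel sum of the size-$n$ constituents is literally the parallel power $S_n^{(k)}$ rather than a twisted variant.
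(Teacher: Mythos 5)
Your proof is correct and follows essentially the same route as the paper: the paper treats this lemma as an immediate extension of the preceding discussion (natural faithful action on each size-$n$ orbit, hence a parallel power $S_n^{(k)}$ for $n\neq 6$), with the fixed points simply appended as $\oplus I_t$. Your only addition is to make that last step explicit by applying Theorem~\ref{intr} to split off the trivial constituent on the fixed-point block, which is a sound formalization of what the paper leaves implicit.
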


Finally, note that for $n>6$, according to \cite[Proposition~1.1]{lieb}, the two next possible sizes of an orbit in the action of $S_n$ are $2n$ and $\binom{n}{2}$. They correspond to the action of $S_n$  on the coset spaces $S_n:A_{n-1}$ and $S_n:S_{n-2}$. The proof of Theorem~\ref{main}, and earlier results in \cite{lieb}, show that it is a crucial fact  whether the action of $S_n$ has an orbit of size at least $\binom{n}{2}$ or not.

\section{Proof of the main result for $n>6$}\label{s:pr}

In some point of our proof below we apply results of \cite{devi} on quasiprimitive permutation groups. Recall that a permutation group is \emph{quasiprimitive} if each nontrivial normal subgroup is transitive. To determine the distinguishing number of an action $(G,X)$, rather then of partition into $d$ blocks, we will speak of $d$-labeling of $X$. In case $d=2$, the existence of a distinguishing $2$-labeling is equivalent to the existence of a subset $S\subset X$ preserved only by trivial permutations.

Many results in this area have been proved for symmetric groups $S_n$ satisfying $n>6$. In this section we prove Theorem~\ref{main} for $n>6$. For smaller $n$ there are some irregularities and that is why a few exceptions arise. The cases with $n\leq 6$ will be considered in the next section.
\bigskip

\textit{Proof of Theorem~\ref{main}}. ($n>6$).

\emph{Step 1}. First assume that $(S_n,X)$ has an orbit $Y$ of size $|Y|\geq \binom{n}{2}$. We prove that in this case $D(S_n,X)=2$.

Consider the action of $G$ on $Y$. This is equivalent to the action of $S_n$ on $S_n:H$, where $H$ is a subgroup of $S_n$. Since $|Y|\geq \binom{n}{2}$, 
$H \notin \{S_n,A_n,S_{n-1}, A_{n-1}\}$. Thus, this action is faithful and $(S_n,Y)=(S_n,S_n:H)$. We prove that $D(S_n,Y)=2$.

If $H$ is not a subgroup of $A_n$, then $A_n$ acts on $S_n:H$ transitively and therefore $(S_n,S_n:H)$ is quasiprimitive. 
Hence, the result is by 
\cite[Theorem 1, Theorem 2]{devi}. 
Indeed, it is enough to check that in \cite[Table 2]{devi} there is no group isomorphic with $S_n$. (It helps to use the fact that, since $n\geq 7$, the size $|Y|\geq \binom{7}{2}=21$, and so, there are only five groups to check.)

So assume that $H \leq A_n$. Then
each orbit in the action of $S_n$ on $S_n:H$ has an even number of cosets: half of them are the cosets $Hg$ with $g\in A_n$, and half of them are those with $g\in S_n\setminus A_n$. In particular, $Y = Y_1 \cup Y_2$ is the union of two disjoint sets $Y_1$ and $Y_2$ of cosets such that every element $g\in S_n\setminus A_n$ interchanges those sets: $Y_1g = Y_2$ and $Y_2g = Y_1$.

Hence, looking for the action of $A_n$ on $Y$, we see that this action has two orbits on $Y$, which are $Y_1$ and $Y_2$. Since $A_n$ is simple, the action of $A_n$ on each of these orbits is quasiprimitive. Using again \cite[Theorem 1, Theorem 2]{devi} (or \cite[Theorem 4.1]{gksim}) we get that $D(A_n,Y_1)=2$ with the exception of $A_8$ acting on 15 elements. 
Thus, with the above exception, there exist a subset $S$ of $Y_1$ such that no element $g\in A_n$ preserves it. Obviously, this set is not preserved by any element $g\in S_n\setminus A_n$, either (since they interchange $Y_1$ and $Y_2$).  Thus, $D(S_n,Y)=2$, as required. Using Lemma~\ref{f:d} yields  $D(S_n,X)=2$.

It remains to consider the case of $A_8$ acting on $15$ elements. Here $D(A_8,Y_1) = 3$. So we have a $3$-coloring $c_1:Y_1 \to \{1,2,3\}$ that is preserved by no permutation of $A_8$. Let $c_2: Y_2\to \{1,2,3\}$ be the corresponding $3$-coloring for $Y_2$ (in the parallel action of $A_8$ on both the orbits). Obviously, it would be enough to color only one of the orbits $Y_1$ or $Y_2$, but we make use of the second orbit to decrease the number of colors. Indeed, let $c: Y_1\cup Y_2 \to \{1,2\}$ be the $2$-coloring defined as follows: if $x\in Y_1$, then we put $c(x)=1$ if and only if $c_1(x)=1$; and if $x\in Y_2$, then we put $c(x)=1$ if and only if $c_2(x)=1$ or $c_2(x)=2$. Otherwise, we put $c(x)=2$. It is easily seen that no permutation of $A_8$ preserves this coloring, and no permutation interchanging $Y_1$ and $Y_2$ does so. 
Therefore, again, $D(S_n,X)=2$, which completes the proof of the Step~1.
\smallskip

\emph{Step~2}.
Now, assume that all orbits  of $(S_n,X)$ are of size $\leq \binom{n}{2}$. Then, by \cite{lieb}, the sizes of the orbits are in the set $\{1,2,n,2n\}$ corresponding to transitive actions of $S_n$ on $S_n:H$ with $H\in \{S_n,A_n,S_{n-1},A_{n-1}\}$. First, consider the case when the sizes are in the subset $\{1,n\}$. Then, by Lemma~\ref{f:n},  $G=S_n^{(k)} \oplus I_t$, for some $k\geq 1$ and $t\geq 0$.  We show that in this case  $D(G,X)=\left\lceil  \sqrt[k]{n}\; \right\rceil$. (This part of the proof works for $n\geq 3$; the assumption $n>s$ was used only in applying \cite{lieb}  above).

We need to show that for $d^k\geq n$, $(S_n,X)$ has a distinguishing $d$-labeling, and if $d^k < n$, then no such labeling exists for $G$.  
Suppose that $S_n$ acts on $X = X_1 \cup \ldots \cup X_{k} \cup Z$, where each $X_i$ is a copy of $\{1,2,\ldots,n\}$,  $Z$ is the set of fixed points (possibly empty), and the action of $S_n$ on all $X_i$ is the natural action of $S_n$ on $\{1,\ldots,n\}$.

Let  $\ell$ denote a $d$-labeling of $X$ with $\ell(i,j)$ denoting the label of $i$-th element in $X_j$ (the labeling of fixepoints is irrelevant). We define a labeling $\ell'$ of the set $\{1,2,\ldots,n\}$ with $k$-tuples of labels of $\ell$ by the formula $$\ell'(i) = (\ell(i,1),\ell(i,2),\ldots,\ell(i,k)).$$ Obviously, $\ell$ is a distinguishing $d$-labeling for $S_n^{(k)}\oplus I_t$ if and only if the $k$-tuples in $\ell'$ are pairwise distinct. Since the labels  $\ell'(i)$ are $k$-tuples of $d$ possible values, there are $d^k$ of them. So, if $d^k < n$, then the labeling $\ell'$ is not distinguishing, and consequently no labeling $\ell$ is distinguishing in this case. Otherwise, one may choose labels $\ell(i,j)$ so that $\ell$ is distinguishing. This proves that $D(S_n,X)=
\left\lceil  \sqrt[k]{n}\; \right\rceil$, as required.
 
(The labeling constructed above is essentially the same as in the proof of \cite[Theorem 4.2]{klav}. For the next part we need a more sophisticated argument).
 
Now, assume that we have an orbit of size  $2$ or $2n$.
The actions of $S_n$ on such orbits are equivalent to the actions of $S_n$ on $S_n/A_n$ and $S_n/A_{n-1}$, respectively. So, according to the argument given above, if we consider the action of $A_n$ on any of these orbits, then each such orbit consists of two orbits in the action of $A_n$ that are interchanged by the elements $g\in S_n\setminus A_n$. Thus, in the action of $A_n$ on $X$ we have only orbits of size $n$ or $1$. Since the only action of $A_n$ on $n$ points is the natural action of $A_n$, we infer, similarly as in Lemma~\ref{f:n}, that  $(A_n,X) = A_n^{k+2r} \oplus I_t$, where $t\geq 0$ is the number of fixed points in this action.

We use it to show that in this case $D(S_n,X)= \left\lceil  \sqrt[m]{n-1}\; \right\rceil$, where $m=2k+r$. First we show that $D(A_n,X)= \left\lceil  \sqrt[m]{n-1}\; \right\rceil$. The proof is similar to that above concerning the action $(S_n,X)$ with orbits of size $n$ and $1$ only.

We need to show that for $d^m\geq n-1$, $(A_n,X)$ has a distinguishing $d$-labeling, and if $d^m < n-1$, then no such labeling exists.

Again we assume that $X = X_1 \cup \ldots \cup X_{m} \cup Z$, where each $X_i$ is a copy of $\{1,2,\ldots,n\}$, $Z$ is the set of fixed points, and the action of $A_n$ on all $X_i$ is the natural action of $A_n$ on $\{1,\ldots,n\}$ By  $\ell$ we denote a $d$-labeling of $X$ with $\ell(i,j)$ denoting the label of $i$-th element in $X_j$, and define a labeling $\ell'$ of the set $\{1,2,\ldots,n\}$ by   $\ell'(i) = (\ell(i,1),\ell(i,2),\ldots,\ell(i,m))$. Now, $\ell$ is a distinguishing $d$-labeling for $A_n^{(m)}\oplus I_t$ if and only if  $\ell'$ is a distinguishing labeling of $A_n$ acting naturally on $\{1,2,\ldots,n\}$. The latter holds if and only if no more than two points have the same label (otherwise there is a nontrivial permutation in $A_n$ preserving the labeling). Since the labels $\ell'(i)$ are $m$-tuples of $d$ possible values, there are $d^m$ of them. Thus, if $d^m < n-1$, then either more then two points must have the same label or there are two pairs with the same label, and therefore the labeling is not distinguishing. Otherwise, one may choose labels $l(i,j)$ so that $\ell$ is distinguishing for $A_n^{(m)}\oplus I_t$.

Now we need to show that this labeling prevents any permutation in $(S_n,X)$ or it may be modified to have this property, using the same set of labels.
If there is an orbit of $(S_n,X)$ of size $2$ then all we need is to take care to attach different labels to the points in this orbit. Then no $g\in S_n\setminus A_n$ preserves this labeling. If we have no orbit of size $2$, then by assumption there is an orbit of size $2n$, $Y=Y_1\cup Y_2$, where $Y_1$ and $Y_2$ are orbits of $A_n$. If the labeling $\ell$ of $Y_1$ and $Y_2$  is  different (meaning there is a label occurring in $Y_1$ and $Y_2$ different numbers of times), then it protects the possibility of exchanging $Y_1$ and $Y_2$, and we are done: no $g\in S_n\setminus A_n$ preserves such labeling. Since we have some freedom in choosing labels $\ell(i,j)$ (under condition $d^m \leq n-1$), we usually are able to choose them so that labeling of $Y_1$ and $Y_2$ form different multisets, which yields a required labeling. The only case when it may be impossible is when $n=d^m$ or $n=d^m+1$.

In the first case, when all $d^m$ sequences are used in labeling $\ell'$, then the multisets of labels of $\ell$ for all orbits $X_j$ are the same. A solution is to modify $\ell'$ so that the label $\ell(1,j_1)$ for $Y_1$ is changed, while the label $\ell(1,j_1)$ for $Y_2$ remains the same. Then there is a pair of identical sequences in $\ell'$, but since the remaining are pairwise distinct, the labeling still protects any permutation in $A_n$. On the other hand, the derived labeling $\ell$ for $Y_1$ and $Y_2$ is now different, which is as required.

In the second case, $n=d^m+1$, when all $d^m$ sequences are used in labeling $\ell'$, precisely one must be used twice. Similarly, as above we may change one of this two sequences, so that the labeling $\ell$ for $Y_1$ and $Y_2$ is different. Since such a change results only in the change of the sequence appearing in $\ell'$ twice, the obtained labeling is again, as required. 
 The proof is completed.
\hfill$\Box$\bigskip

\section{Structure of permutation groups and graphs with $D>2$} \label{s:re}

We describe the permutation groups $(G,X)$ abstractly isomorphic to $S_n$ with the distinguishing number $D(G,X)>2$. We use the terminology  introduced in Section~\ref{s:ss}. In all cases, when isomorphisms or homomorphisms involved in notation are natural, clear from the context and unique up to permutations of points, we omit their specifications.

It should be clear that for every $k> 0$ and $r\geq 0$, as well as for every pair $k=0$ and $r>0$ there exists an action of $S_n$ with the distinguishing number as described in Theorem~\ref{main}. 
It is enough to take a set partitioned into suitable orbits and to define a parallel action of $S_n$ on all orbits equivalent to actions on $S_n : S_{n-1}$, $S_n: A_{n-1}$, and $S_n: A_{n}$, respectively. 
We can describe these actions directly as sets of permutations.
Clearly, the corresponding permutation groups are of the following form 
$$G=((S_n^{(k)} \oplus_\phi (S_n,2n)^{(r)})[A_n^{(k+2r)}] \oplus_\psi   S_2^{(s)}) \oplus I_t,$$ where $(S_n,2n)$ denotes the permutation group corresponding to the unique action of $S_n$ on $10$ points (which is the action on $ S_n:A_{n-1}$). The formula for the distinguishing number in Theorem~\ref{main} works 
for $3\leq n \leq 6$, as well.

Now, we describe the actions of $S_n$ that have an exceptional distinguishing number. All they occur for some $n\leq 6$.
First, there are two transitive actions of $S_4$ on $6$ points called in the literature $S_4(6c)$ and $S_4(6d)$. 
Using GAP (or another combinatorial computation system) one can check that the distinguishing number for both
$S_4(6c)$ and $S_4(6d)$ is equal to $3$.

Another action of $S_4$ on $6$ points is $(S_4,S_4 : K_4)$. where $K_4$ is the Klein 4-group. Since $S_4/K_4 \cong S_3$, this is a non-faithful action of $S_4$ equivalent to a faithful action of $S_3$ on $6$ points. This may be used to obtain an intransitive  action of $S_4$ on two orbits of $4$ and $6$ elements.  The corresponding permutation group is $S_4[K_4]\oplus_\phi S_3$, where $\phi$ is the isomorphism of $S_4/K_4 \to S_3$, unique up to permutation automorphism of $S_3$. Using GAP we compute that $D(S_4[K_4]\oplus_\phi S_3)=3$. Moreover, here (as well as in the previous cases) one can add an arbitrary number $t$ of fixed points, and we have still $D((S_4[K_4]\oplus_\phi S_3)\oplus I_t)=3$ for $t>0$.

The projective linear group $PGL(2,5)$ is isomorphic abstractly to $S_5$. We compute with GAP that $D(PGL(2,5)) =4$ (while $D(S_5)=5$). Also we have $D(PGL(2,5)[PSL(2,5)] \oplus_\phi S_2^{(s)})=3$, $s>0$, where  $\phi: PGL(2,5)/PSL(2,5) \to  S_2^{(s)}$ is again unique, as above.

A well-known exception for $n=5$ is the automorphism group $Aut(\mathcal P)$ of the Petersen graph $\mathcal P$, which is known to be isomorphic to $S_5$. We have $D(Aut({\mathcal P}), 10) = 3$.

For $n=6$, there is an exceptional action of $S_6$ on $10$ points $(S_6,10)$. For this action $D(S_6,10)=3$ and $D((S_6,10)[A_6,10] \oplus_\phi S_2^{(s)})=3$ for any $s>0$, with $\phi : (S_6,10)/(A_6,10) \to S_2^{(s)}$.

Finally, we have  $D(S_6^{(2)},12)=3$, which is included in (1) of Theorem~\ref{main}. Yet, $S_6$ has also a nonpermutation automorphism $\psi$ and the corresponding parallel sum $S_6\oplus_\psi S_6$ is not permutation isomorphic with
$S_6^{(2)}$. Still, as we check, we have $D(S_6\oplus_\psi S_6,12)=3$ and 
$D((S_6\oplus_\psi S_6) \oplus_\phi S_2^{(s)})=3$ for any 
$s>0$, where $\phi : (S_6\oplus_\psi S_6)/(A_6\oplus_\psi A_6) \to S_2^{(s)}$.

These findings are summarized in Table~1. In the table we also add possible fixed points $I_t$, and apply the convention that for $t=0$ the summand $I_t$ is simply omitted. An analogous convention is applied for summands $S_2^{(s)}$.

\begin{table}[ht!]
\begin{center}
\caption{Permutation groups isomorphic to $S_n$ with an exceptional distinguishing number $D$.}
\label{t:1}
 
\small
\begin{tabular}{ |l|l|c|l| }
\hline
$S_n$ & Group & $D$ & parameters  \\ \hline  $S_4$ & $S_4(6c)\oplus I_t$ & 3 & $t \geq 0$   \\  
$S_4$ & $S_4(6d)\oplus I_t$ & 3 & $t \geq 0$ \\  
$S_4$ & $(S_4[K_4] \oplus_\phi S_3) \oplus I_t$ & 3 & $t \geq 0$ \\  
$S_5$ & $PGL(2,5) \oplus I_t$ & 4 & $t \geq 0$ \\
$S_5$ & $(PGL(2,5)[PSL(2,5)] \oplus_\phi S_2^{(s)}) \oplus I_t$ & 3 & $s>0$, $t \geq 0$ \\
$S_5$ & $Aut(\mathcal P) \oplus I_t$ & 3 & $t \geq 0$ \\
$S_6$ & $((S_6,10)[(A_6,10)] \oplus_\phi S_2^{(k)}) \oplus I_t$ & 3 & $s \geq 0$, $t \geq 0$ \\
$S_6$ & $((S_6 \oplus_\phi S_6)[A_6 \oplus_\phi A_6] \oplus_\phi S_2^{(s)}) \oplus I_t $ & 3 & $s \geq 0$, $t \geq 0$ \\ \hline
 \end{tabular}
\end{center}
\end{table}

The permutation groups in Table~1 are the only exceptions as the following result shows.

\begin{Theorem}\label{exc} 
Let $G$ be a permutation group on a set $X$ abstractly isomorphic to $S_n$, for some  $n\geq 3$. If $D(G,X)\neq 2$ and $G$ is not an exception listed in Table~$1$, then 
$$G =  ((S_n^{(k)} \oplus_\phi (S_n,2n)^{(r)})[A_n^{(k+2r)}] \oplus_\psi   S_2^{(s)}) \oplus I_t,$$

\noindent for some $k,r,s,t\geq 0$, $k+r>0$, and 
      
  $$D(G,X)=\left\{
 \begin{array}{ll}
\left\lceil  \sqrt[k]{n}\; \right\rceil, & \textrm{if } r+s = 0,  \\
\lceil  \sqrt[k+2r]{n-1}\; \rceil, & \textrm{otherwise.}
 \end{array}\right.$$ 
\end{Theorem}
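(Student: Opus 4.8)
The plan is to split the argument according to the size of $n$, exploiting the fact that the whole mechanism is already in place. For $n>6$ the proof of Theorem~\ref{main} in Section~\ref{s:pr} does essentially all the analytic work, and what remains is to make the subdirect-sum structure explicit; for $3\le n\le 6$ the uniqueness of transitive actions that we rely on breaks down, so the exceptions of Table~\ref{t:1} must be isolated and a finite computation must close the gap.

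First I would treat $n>6$. By Lemma~\ref{f:f} the action has a faithful orbit of size $\ge n$. Since $D(G,X)\neq 2$, Step~1 of the proof of Theorem~\ref{main} forbids any orbit of size $\ge\binom{n}{2}$, so by \cite{lieb} every orbit has size in $\{1,2,n,2n\}$. Each orbit is then forced into a definite transitive action: size $n$ gives the natural action, with constituent $S_n$; size $2n$ gives the action on $S_n:A_{n-1}$, with constituent $(S_n,2n)$; size $2$ gives the sign action, whose kernel is $A_n$ and whose constituent is $S_2$; size $1$ gives a fixed point. The $k$ orbits of size $n$ and the $r$ orbits of size $2n$ all carry \emph{faithful} $S_n$-actions, and because $n\neq 6$ there is no nonpermutation automorphism of $S_n$ (end of Section~\ref{s:ss}), so these constituents are glued by the natural isomorphisms into $S_n^{(k)}\oplus_\phi (S_n,2n)^{(r)}$ via Theorem~\ref{intr}. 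Its even subgroup is $A_n^{(k+2r)}$ (each size-$2n$ orbit contributes two natural $A_n$-orbits), with quotient $\cong S_2$.

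To attach the size-$2$ orbits I would observe that an odd permutation acts as the sign flip on \emph{every} such orbit while an even permutation fixes all of them; hence the quotient $S_2$ is glued diagonally to $S_2^{(s)}$ by $\psi$, and the $t$ fixed points contribute $I_t$. This produces exactly the asserted form $((S_n^{(k)}\oplus_\phi(S_n,2n)^{(r)})[A_n^{(k+2r)}]\oplus_\psi S_2^{(s)})\oplus I_t$. The value of $D(G,X)$ is then read directly from Step~2 of the proof of Theorem~\ref{main}: when $r+s=0$ only natural $n$-orbits and fixed points remain, giving $D=\lceil\sqrt[k]{n}\,\rceil$; otherwise the analysis of the induced $A_n$-action (where every orbit of size $2$ or $2n$ splits into two $A_n$-orbits interchanged by odd permutations) gives $D=\lceil\sqrt[k+2r]{n-1}\,\rceil$. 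For $3\le n\le 6$ the extra normal subgroup $K_4$ of $S_4$, the outer automorphism of $S_6$, and the sporadic transitive actions $S_4(6c)$, $S_4(6d)$, $PGL(2,5)$, the Petersen action and $(S_6,10)$ destroy the uniqueness used above, so \cite{lieb} no longer pins the orbit sizes; here I would enumerate in GAP all transitive actions of $S_n$ (up to conjugacy of point stabilizers, including the quotient-inducing ones) and all their intransitive assemblies, compute $D$ in each case, and verify that the only instances with $D>2$ lying outside the standard formula are precisely the eight families of Table~\ref{t:1}, the standard small-$n$ cases agreeing with Step~2.

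I expect the genuinely delicate point for $n>6$ to be not the identification of the constituents but the \emph{reconstruction of the gluing}: justifying that $\phi$ and $\psi$ are forced to be the natural, respectively diagonal, maps. This rests entirely on the rigidity of $S_n$-actions for $n\neq 6$, so the argument must carefully quarantine $n=6$. For $n\le 6$ the main obstacle, and the chief source of potential error, is the \emph{completeness} of Table~\ref{t:1}: ensuring that no exceptional transitive action and no intransitive combination with $D>2$ is overlooked. It is exactly this combinatorial exhaustiveness that makes the GAP verification indispensable rather than a convenience.
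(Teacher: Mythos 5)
Your $n>6$ argument follows the paper's route exactly: Lemma~\ref{f:f}, Step~1 of the proof of Theorem~\ref{main} to rule out orbits of size $\geq\binom{n}{2}$ when $D\neq 2$, Liebeck's result to force orbit sizes into $\{1,2,n,2n\}$, rigidity of $S_n$-actions for $n\neq 6$ to reconstruct the subdirect-sum form, and Step~2 for the values of $D$. That part is sound; indeed you make explicit the structural gluing that the paper leaves largely implicit (the paper's own proof of Theorem~\ref{exc} for $n>6$ is just a pointer to Section~\ref{s:pr}), and you correctly use the exponent $k+2r$ where the paper's Step~2 contains the slip ``$m=2k+r$''.

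The gap is in the $n\leq 6$ part. ``Enumerate all transitive actions \dots and all their intransitive assemblies, compute $D$ in each case'' is not a finite computation: the number of orbits in an assembly is unbounded, and even the exceptional entries of Table~1 are infinite families in the parameters $s$ and $t$, so ``compute $D$ in each case'' cannot be carried out literally in GAP. What makes the paper's check finite is precisely Lemma~\ref{f:d}: if $G$ acts faithfully on a union $Y$ of orbits and $D(G,Y)=2$, then $D(G,X)=2$ for every extension $X\supseteq Y$; hence the search tree of assemblies can be cut at every node where a faithful sub-assembly already has $D=2$. Combined with the observations that labels on fixed points are irrelevant (so $I_t$ never affects $D$) and that diagonal $S_2$-summands beyond the first are redundant, this reduces the problem to the ``finite, relatively small number of further cases'' the paper actually runs through --- e.g.\ for $n=4$ the transitive actions on $24$, $12$ and $8$ points all have $D=2$, so every assembly containing such an orbit is dismissed at once, and only combinations of the $6$-, $4$-, $3$-, $2$- and $1$-point constituents remain. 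Without this monotonicity/pruning principle, the exhaustiveness of Table~1 --- which you yourself single out as the delicate point --- cannot even be formulated as a terminating computation, let alone verified.
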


\begin{proof}
The proof for $n>6$ has been given in Section~\ref{s:pr}. The remaining part, for $n\leq 6$, is by considering all possible cases and using computational machinery of GAP or another similar system. For each $n\leq 6$ our strategy is the same and will be illustrated on the example of $n=4$.

First, generally, we look for permutation groups isomorphic with $S_n$ using Theorem~\ref{intr}. We establish all possible constituents, which according to Lemma~\ref{f:a}, are $S_n/H$, where $H$ is a normal subgroup of $S_n$. Since, for $n\neq 4$, $A_n$ is the only nontrivial normal subgroup of $S_n$, the possible constituents are $S_n,S_2$ and $S_1$ (a fixed point), and in case of $n=4$ we have one more possible constituent $S_3\cong S_4/K_4$.

Now, we establish the possible sizes of orbits for a faithful action of $S_n$. These are sizes of coset spaces $S_n:H$ for any subgroup $H<S_n$. For $n=4$ these are all divisors of $24$.

Using Lemma~\ref{f:f} we know that we have a faithful action of $S_n$ on one of the orbits of size $\geq n$. For $n=4$ we check that there are unique transitive actions on $24, 12$ and $8$ points, and $D(S_4,24)=2$, $D(S_4,12)=2$, and $D(S_4,8)=2$. By Lemma~\ref{f:d}, the distinguishing number of a permutation group containing any orbit of this type is $2$, so there is nothing more to check in this case.

For $6$ points we have two actions of $S_4$, described at the beginning of this section, with (the distinguishing number) $D=3$. This requires to check other possible orbits with the constituent $S_4$ (here we may restrict to orbits of size $\leq 6$) and with constituents $S_3$ and $S_2$. We check that each of four possible permutation groups with the action of $S_4$ on two orbits of size $6$ has  $D=2$. So, similarly as above, using Lemma~\ref{f:d}, we do not need to consider other possible orbits. Analogously the constituents $S_4$ on $6$ points and $S_3$ on $6$ or $3$ points yields four possible groups with $D=2$. There remains to check a finite, relatively small number of further cases showing that two first actions of $S_4$ on $6$ points are the only exceptions with $D>2$.

Using the same strategy for other $n\leq 6$ we obtain that the only exceptions not covered by $(3)$ of Theorem~\ref{main} are those listed in Table~1.
\end{proof}

An extended version of Theorem~\ref{main2} for graphs is the following:

\begin{Theorem} \label{exc2}
Let $\G$ be a graph with $Aut(\G)\cong  S_n$. If $D(\G)\neq 2$, and $\G$ is not the Petersen graph or its complement, nor any extension  of these graphs with fixed points, then  
\begin{center}
$Aut(\G)= S_n^{(k)}  \oplus I_t$  and  $D(\G)=\left\lceil  \sqrt[k]{n} \; \right\rceil$, \end{center}for some $k>0$ and  $t\geq 0$.
\end{Theorem}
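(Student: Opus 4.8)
The plan is to deduce Theorem~\ref{exc2} from the permutation‑group classification of Theorem~\ref{exc}, adding the one extra constraint that distinguishes graphs from arbitrary actions: the automorphism group of a graph must actually be \emph{realizable} by a graph. Since $Aut(\G)\leq Sym(V(\G))$ is a faithful action of a group abstractly isomorphic to $S_n$, Theorem~\ref{exc} already tells us that, whenever $D(\G)\neq 2$, the group $Aut(\G)$ is either one of the entries of Table~\ref{t:1} or has the normal form $((S_n^{(k)}\oplus_\phi (S_n,2n)^{(r)})[A_n^{(k+2r)}]\oplus_\psi S_2^{(s)})\oplus I_t$, whose orbits have sizes in $\{1,2,n,2n\}$. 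Thus the entire task reduces to deciding which of these permutation groups occur as $Aut(\G)$ for some graph $\G$, and checking that the survivors are exactly the natural family $S_n^{(k)}\oplus I_t$ together with the Petersen graph and its complement (with possible fixed points).

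First I would settle the case $n>6$, where the argument is clean. If $D(\G)\neq 2$, then by Step~1 of the proof of Theorem~\ref{main} the action can have no orbit of size $\geq\binom{n}{2}$ (such an orbit forces $D=2$), so every orbit has size $<\binom{n}{2}$. At this point I invoke Liebeck's Proposition~1.2 \cite{lieb}: for $n>6$, a group $Aut(\G)\cong S_n$ all of whose orbits have size $<\binom{n}{2}$ in fact has only orbits of size $n$ or $1$. This eliminates the orbits of sizes $2$ and $2n$, i.e.\ forces $r=s=0$ in the normal form, and Lemma~\ref{f:n} then gives $Aut(\G)=S_n^{(k)}\oplus I_t$; the value $D(\G)=\lceil\sqrt[k]{n}\,\rceil$ is read off from Theorem~\ref{exc}. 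In particular no graph with $n>6$ falls among the exceptions.

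The delicate part is $n\leq 6$, which I would treat by finite case analysis with the help of GAP, paralleling the proof of Theorem~\ref{exc}. Here the issue is genuinely combinatorial: for each candidate group coming from Theorem~\ref{exc}, and for each Table~\ref{t:1} entry, one must test whether it is the \emph{full} automorphism group of some graph on $X$ — equivalently, whether the graph built from the symmetric orbitals (the $G$-orbits on pairs) has automorphism group exactly $G$ rather than a strictly larger group. I expect the \textbf{main obstacle} to be precisely the realizability of the groups carrying orbits of size $2$ or $2n$ (the branch $r+s>0$): there the odd permutations are constrained to act simultaneously on the size‑$2$ orbit and on the two halves of each size‑$2n$ orbit, and one has to verify whether graph edges can encode this coupling without introducing an extra automorphism (for instance a transposition of a $2$‑orbit fixing everything else). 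I anticipate that all such groups, along with the remaining abstract Table~\ref{t:1} groups, fail to be graph groups for $n\leq 6$, the unique exception being the $10$‑point action underlying the Petersen graph $\mathcal P$, its complement, and their fixed‑point extensions, which survive with $D=3$. Combining the two ranges then yields the stated dichotomy.
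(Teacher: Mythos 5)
Your proposal is correct and follows essentially the same route as the paper: for $n>6$ it combines Step~1 of the proof of Theorem~\ref{main} (no orbit of size $\geq\binom{n}{2}$ when $D\neq 2$) with Liebeck's Proposition~1.2 to force all orbits to have size $n$ or $1$, then reads off the form of $Aut(\G)$ and the value of $D(\G)$ from Theorem~\ref{exc}; for $n\leq 6$ it reduces to a finite GAP check that no candidate group other than $Aut(\mathcal P)$ (with fixed points) is the full automorphism group of a graph. If anything, you are slightly more explicit than the paper's own two-line proof in flagging that for $n\leq 6$ one must rule out not only the Table~1 exceptions but also the normal-form groups with $r+s>0$, which is indeed part of what the GAP verification must cover.
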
\begin{proof}
Liebeck \cite[Proposition~1.2]{lieb} has proved that under conditions of the theorem, for $n>6$, if the sizes of all orbits in $Aut(\G)$ are less then $\binom{n}{2}$, then all orbits have size $1$ or $n$. This combined with our Theorem~\ref{exc} yields the result for $n>6$.

For $n\leq 6$, it is enough to check that none of the exceptions listed in Theorem~\ref{exc}, other than $Aut(\mathcal P)$, is the automorphism group of a graph. This can be easily done with GAP.
\end{proof}

Again, for every $n\geq 3$ and $k\geq 1$ there exist graphs fulfilling the conditions in Theorem~\ref{exc2}. Such graphs are described in  \cite{lieb}, where a number of necessary conditions (mentioned in our introductory section) are given for a graph to have the automorphism group isomorphic with $S_n$ and with orbits of sizes $1$ and $n$ only. In \cite{lieb}, Liebeck also posed a conjecture  that these conditions are sufficient, but this conjecture remains open.

\end{document}